\numberwithin{equation}{section}
\DeclareMathOperator{\supp}{supp}
\newcommand{\quash}[1]{}
\def\XXint#1#2#3{{\setbox0=\hbox{$#1{#2#3}{\int}$}
     \vcenter{\hbox{$#2#3$}}\kern-.5\wd0}}
\title[Fourier inequalities and sign uncertainty]{Fourier inequalities and sign uncertainty}
\author{Roni Edwin}
\newtheorem{theorem}{Theorem}[section]
\newtheorem*{theorem1}{Theorem}
\newtheorem*{remark1}{Remark}
\newtheorem{problem}[theorem]{Problem}
\begin{document}

\begin{abstract}
    Motivated by inequalities in Fourier analysis, we present an improvement on the lower bound for the sign uncertainty principle of  Bourgain, Clozel and Kahane in high dimensions. Additionally, our methods can be used to match the existing Torquato-Stillinger lower bounds for the Cohn-Elkies linear program for sphere packing.
\end{abstract}

\maketitle

\tableofcontents

\renewcommand{\theenumi}{\alph{enumi}}
\section{Introduction}
Uncertainty is a widespread phenomenon in Fourier analysis. Loosely speaking, it says that you cannot have a large degree of control over a function and its Fourier transform. For example, the classic Heisenberg uncertainty principle says a function $f \in L^2\mathopen{}\left(\mathbb{R}\right)\mathclose{}$ and its Fourier transform cannot be both localised near points, illustrated in the inequality \begin{align}
    \left(\int_{\mathbb{R}}\left(x-x_0\right)^2\left|f(x)\right|^2\textup{d}x\right)\left(\int_{\mathbb{R}}\left(\omega-\omega_0\right)^2\bigl|\widehat{f}(\omega)\bigr|^2\textup{d}\omega\right)\ge \frac1{16\pi^2}\left(\int_{\mathbb{R}}|f|^2\right)^2.
    \label{callmebaby}
\end{align} Here we use the normalisation of the Fourier transform that makes it unitary, so \begin{align*}
    \widehat{f}(\omega)=\int_{\mathbb{R}^d}f(x)e^{-2\pi i \left\langle x,\omega\right\rangle}\textup{d}x. 
\end{align*} Notice that if $f$ and $\widehat{f}$ are localised near $x_0$ and $\omega_0$ respectively, the left-hand side of \eqref{callmebaby} would be small, contradicting the inequality. More generally, one can also consider localisation on compact sets, or sets of finite measure. For example, Jaming \cite{jaming2007nazarov} shows that for sets $\Sigma, S \subset \mathbb{R}^d$ of finite measure, one has the inequality \begin{align}
    \int_{\mathbb{R}^d}|f|^2\le K(\Sigma, S)\left(\int_{\mathbb{R}^d\setminus \Sigma}\left|f\right|^2+\int_{\mathbb{R}^d\setminus S}\bigl|\widehat{f}\bigr|^2\right).
    \label{woman}
\end{align} Qualitatively, this says a function $f$ and its Fourier transform $\widehat{f}$ cannot have too much of their mass on sets of finite measure. Quantitatively, the constant $K(\Sigma,S)$ is shown to be of the form \begin{align*}
    K(\Sigma,S)=C\exp\left(C\min\left(|S||\Sigma|,|S|^{\frac1{d}}w(\Sigma),|\Sigma|^{\frac1{d}}w(S)\right)\right),
\end{align*} for some constant $C>0$, where $w(X)$ and $|X|$ denote the mean width and Lebesgue measure of $X \subset \mathbb{R}^d$ respectively.

Another type of uncertainty principle is one due to G.H. Hardy, which says that a function and its Fourier transform cannot decay too quickly, given by the following theorem: \begin{theorem1}[Hardy uncertainty principle, \cite{hardy1933theorem}]
Suppose $f\colon \mathbb{R} \to \mathbb{C}$ is a measurable function such that $|f(x)|\le Ce^{- a\pi x^2}$ and $\bigl|\widehat{f}(\omega)\bigr|\le C'e^{-\frac{\pi\omega^2}{a} }$ for all $x,\omega \in \mathbb{R}$. Then $f$ is a scalar multiple of the Gaussian $x \mapsto e^{-a \pi x^2}$.
\end{theorem1} There are also results that concern the pointwise behaviour of a function and its Fourier transform, particularly the interpolation formulas explored in \cite{radchenko2019fourier} and \cite{cohn2022universal}. For example, in \cite{radchenko2019fourier}, the authors show any even function $f \in \mathcal{S}\mathopen{}\left(\mathbb{R}\right)\mathclose{}$ is uniquely determined by the values \begin{align*}
    \left\{f\left(\sqrt{n}\right):n \in \mathbb{N}\cup\{0\}\right\}\cup \left\{\widehat{f}\left(\sqrt{n}\right):n \in \mathbb{N}\cup\{0\}\right\},
\end{align*} by providing an interpolation basis. Similar interpolation formulas are proven in \cite{cohn2022universal} for radial Schwartz functions on $\mathbb{R}^8$ and $\mathbb{R}^{24}$. The common theme among these principles is the idea that one cannot have arbitrary control over $f$ and $\widehat{f}$ in terms of decay and mass concentration.

The \emph{sign uncertainty principle,} originally investigated by Bourgain, Clozel and Kahane in \cite{bourgain2010principe} on the other hand, concerns control over the sign of $f$ and $\widehat{f}$. Their result is as follows. Observe that if $f\colon \mathbb{R} \to \mathbb{R}$ is an even function such that $f(0)\le 0$ and $\widehat{f}(0)\le 0$, then it is not possible for both $f$ and $\widehat{f}
$ to be non-negative outside an arbitrary small neighbourhood of the origin: $\widehat{f}$ is real-valued and even, since $f$ is, and the fact that \begin{align*}
    f(0)=\int_{\mathbb{R}}\widehat{f}\le 0, \ \widehat{f}(0)=\int_{\mathbb{R}} f\le 0
\end{align*} implies the quantities \begin{align*}
    A(f)&\coloneqq \inf_{r>0}\left\{f(x)\ge 0 \text{ for all }|x|\ge r\right\}, \\
    A\bigl(\widehat{f}\bigr)&\coloneqq \inf_{r>0}\left\{\widehat{f}(x)\ge 0 \text{ for all }|x|\ge r\right\},
\end{align*} are strictly positive, unless $f$ is identically $0$. Moreover the number $A(f)A\bigl(\widehat{f}\bigr)$ is invariant under dilation $x \to \lambda x$ in the sense that \begin{align*}
    A(f)A\bigl(\widehat{f}\bigr)=A\left(x \mapsto f(\lambda x)\right)A\left(\mathcal{F}\left(x \mapsto f(\lambda x)\right)\right)
\end{align*}  ($\mathcal{F}$ denotes the Fourier transform),
so it is of interest to study. The authors in \cite{bourgain2010principe} show this quantity cannot be made arbitrarily small for such applicable $f$, with the following theorem: \begin{theorem}[\cite{bourgain2010principe}, Th\' eor\`eme $1$]
    Let $f\colon \mathbb{R} \to \mathbb{R}$  be a nonzero, integrable, even function
such that $f(0),\widehat{f}(0)\le 0$, and $\widehat{f} \in L^1\mathopen{}\left(\mathbb{R}\right)\mathclose{}$. Then $A(f)A\bigl(\widehat{f}\bigr)\ge 0.1687$,
and $0.1687$ cannot be replaced by $0.41$.
\end{theorem}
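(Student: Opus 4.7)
My approach attacks the problem through a combination of Poisson summation and Fourier-analytic duality. By the dilation invariance $A(f(\lambda\cdot))\,A(\widehat{f(\lambda\cdot)})=A(f)\,A(\widehat f)$, I would first normalize so that $A(f)=A(\widehat f)=r$ and aim to prove $r^{2}\ge 0.1687$. A convenient preliminary is to reduce to the self-dual case by setting $F=f+\widehat f$, a $+1$-eigenfunction of the Fourier transform inheriting the sign constraints of $f$ and $\widehat f$. In the degenerate sub-case $F\equiv 0$ (so $\widehat f=-f$), the hypothesis $f(0),\widehat f(0)\le 0$ gives $f(0)=0$, while $f\ge 0$ and $\widehat f=-f\ge 0$ on $|x|\ge r$ forces $f$ to vanish outside $[-r,r]$; the same then holds for $\widehat f$, and a function and its transform both of compact support must vanish identically, contradicting nontriviality. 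Hence we may assume $f=\widehat f$.

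The central tool is Poisson summation. For any $\alpha>0$,
\begin{align*}
\alpha\sum_{n\in\Z}f(\alpha n)=\sum_{n\in\Z}f(n/\alpha),
\end{align*}
and this becomes informative precisely when $\alpha\in[r,1/r]$ (nonempty only when $r\le 1$, so WLOG): every term with $n\ne 0$ on either side is then non-negative, while the $n=0$ terms are the non-positive value $f(0)$. Rearranging yields a one-parameter family of identities
\begin{align*}
(\alpha-1)\,f(0)=\sum_{n\ne 0}f(n/\alpha)-\alpha\sum_{n\ne 0}f(\alpha n),
\end{align*}
one for each admissible $\alpha$.

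To extract a sharp numerical bound I would integrate this family against a carefully chosen positive weight $w(\alpha)\,d\alpha$ on $[1,1/r]$. After applying Fubini, the resulting inequality should reorganize into the shape
\begin{align*}
c_{0}\,f(0)\ge \int_{\R}f(x)\,K(x)\,dx,
\end{align*}
where $c_0>0$ is explicit and $K$ is essentially a Mellin-type transform of $w$. I would then choose $w$ so that $K$ is strictly negative on $(-r,r)$ and non-negative outside: combined with $f(0)\le 0$ and $f\ge 0$ on $|x|\ge r$, both sides then have opposite signs, forcing $f\equiv 0$ once $r$ falls below a critical threshold and producing the desired contradiction.

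The hard part is not the strategy but the optimization: constructing the weight $w$ (equivalently, the dual certificate of an underlying linear program) that realises the BCK constant $0.1687$ rather than some weaker numerical value. I would test Gaussian and Hermite-Laguerre-type ansätze, since these diagonalise the Fourier transform and make the self-dual constraint transparent, and reduce to a finite-dimensional variational problem. Matching $0.1687$ exactly appears to require either a delicate optimisation over such a restricted family or the explicit eigenfunction construction used in the original Bourgain--Clozel--Kahane argument; this last step is where I expect the main technical difficulty to lie.
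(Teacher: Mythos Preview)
This theorem is not proved in the paper; it is quoted from Bourgain--Clozel--Kahane as background and motivation, with no argument supplied. There is therefore no in-paper proof to compare your proposal against.

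On the proposal itself: the reduction to a $+1$-eigenfunction via $F=f+\widehat f$ and the disposal of the degenerate case $\widehat f=-f$ are both correct and standard. But what follows is a strategy outline, not a proof, and it has concrete gaps. First, you explicitly leave the central step open --- the construction of the weight $w$ that would yield the constant $0.1687$ --- conceding this is ``the hard part'' and offering only heuristic suggestions (Gaussian or Hermite ans\"atze) for how one might search for it. Second, you do not address the upper-bound claim at all: showing that $0.1687$ cannot be replaced by $0.41$ requires exhibiting an explicit admissible function with $A(f)A(\widehat f)<0.41$, and no such construction appears. There is also a softer logical issue in the Poisson-summation step: your intended contradiction (``both sides then have opposite signs, forcing $f\equiv 0$'') does not follow as written, because on $(-r,r)$ you have no sign information on $f$, so the sign of $\int_{|x|<r} f(x)K(x)\,dx$ is uncontrolled and the mechanism by which a numerical threshold on $r$ emerges still needs to be articulated. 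Finally, applying Poisson summation requires some regularity or decay beyond $f,\widehat f\in L^1$, which you would need to justify or arrange by a further approximation argument.
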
 It is natural consider higher dimensional generalisations of this. Formally, let $\mathcal{A}_+(d)$ denote the set of non-zero functions $f\colon \mathbb{R}^d \to \mathbb{R}$ such that $f,\widehat{f} \in L^1\mathopen{}\left(\mathbb{R}^d\right)\mathclose{}$, $\widehat{f}$ is real-valued, $f$  is eventually non-negative while $f(0)\le 0$,   and $\widehat{f}$ is eventually non-negative while $\widehat{f}(0)\le 0$. One can also consider minimising $A(f)A\bigl(\widehat{f}\bigr)$, subject to $f \in \mathcal{A}_+(d)$. This leads to the $+1$ eigenfunction uncertainty principle, as described in Problem 1.1 in \cite{cohn2019optimal}.
\begin{problem}[$+1$ uncertainty principle]
    Minimise $A(g)$ over all $g\colon \mathbb{R}^d \to \mathbb{R}$ such that \begin{enumerate}
        \item $g \in L^1\mathopen{}\left(\mathbb{R}^d\right)\mathclose{}\setminus \{\mathbf{0}\}$ and $\widehat{g}=g$,
        \item $g(0)=0$ and $g$ is eventually non-negative.
    \end{enumerate}
    \label{dancefloor}
\end{problem} Let \begin{align*}
    \mathbf{A}_+(d) \coloneqq \inf_{f \in \mathcal{A}_+(d)}\sqrt{A(f)A\bigl(\widehat{f}\bigr)}.
\end{align*} Theorem $3$ in \cite{gonccalves2017hermite} shows this infimum is equivalent to the minimal value of $A(g)$ in Problem \ref{dancefloor}, and moreover admits an extremiser.

Upper bounds for $\mathbf{A}_+(d)$ are known (\cite{gonccalves2017hermite}), but the only dimension for which the minimiser is known is $d=12$, proven by Cohn and Gon\c calves in \cite{cohn2019optimal} where $\mathbf{A}_+(12)=2$, via modular forms, which follows the approach of Viazovska in constructing `magic functions' for the sphere packing problem in dimensions $8$ \cite{viazovska2017sphere}, and $24$ \cite{cohn2017sphere}.  There is also a complementary uncertainty principle, a $-1$ uncertainty principle introduced by Cohn and Gon\c calves in \cite{cohn2019optimal}, which relates to the linear programming bounds for sphere packing. We expand on this connection later.
\begin{problem}[$-1$ uncertainty principle]
    Minimise $A(g)$ over all $g\colon \mathbb{R}^d \to \mathbb{R}$ such that \begin{enumerate}
        \item $g \in L^1\mathopen{}\left(\mathbb{R}^d\right)\mathclose{}\setminus \{\mathbf{0}\}$ and $\widehat{g}=-g$,
        \item $g(0)=0$ and $g$ is eventually non-negative.
    \end{enumerate}
    \label{dancefloor2}
\end{problem} 
We similarly call the minimal value of $A(g)$ in the above problem $\mathbf{A}_-(d)$. Like with $\mathbf{A}_+(d)$, we can alternatively define $\mathbf{A}_-(d)$ by introducing the set \begin{align*}
    \mathcal{A}_-(d)\coloneqq \left\{f \in L^1\mathopen{}\left(\mathbb{R}^d\right)\mathclose{}\setminus\{\mathbf{0}\}: \widehat{f} \in L^1\mathopen{}\left(\mathbb{R}^d\right), \ f,\widehat{f}\text{ are real-valued}\right\},
\end{align*}
and setting
\begin{align*}
    \mathbf{A}_-(d)\coloneqq \inf_{f \in \mathcal{A}_-(d)}\sqrt{A(f)A(-\widehat{f})}:
\end{align*} Given any $f \in \mathcal{A}_-(d)$, we can rescale $f$ by replacing it with $f_\lambda  \colon x\mapsto f(\lambda x)$ with $\lambda$ chosen so $A\left(f_\lambda\right)=A\bigl(\widehat{f_\lambda}\bigr)$, in which case \begin{align*}
    \sqrt{A(f)A(-\widehat{f})}=A\bigl(f_\lambda).
\end{align*} Then the function $g=f_\lambda-\widehat{f_\lambda}$ is a non-zero $-1$ eigenfunction, with $A(g)\le A\bigl(f_\lambda)= \sqrt{A(f)A(-\widehat{f})}$. Since the non-zero $-1$ eigenfunctions are contained in $\mathcal{A}_-(d)$, this implies $\mathbf{A}_-(d)$ as defined above is the infimum of $A(g)$ in Problem \ref{dancefloor2}.

There are some things we can note. Without loss of generality we can restrict our attention to radial functions, since the inequalities in Problems \ref{dancefloor}, \ref{dancefloor2} are invariant under rotations. One thing that makes this problem a bit finnicky is the fact that it does not lend itself well to approximations by dense subspaces, as the mapping $f \mapsto A(f)$ is not continuous in any natural sense. For example, while the extremiser for the $+1$ uncertainty principle lies in the Schwartz class $\mathcal{S}\mathopen{}\left(\mathbb{R}^d\right)\mathclose{}$ for $d=12$, it is unclear if this holds in all dimensions, or even if the infimum can be achieved by restricting our attention to a smaller class of functions, for example, Schwartz functions. Gon\c calves, Silva and Ramos show in \cite{gonccalves2021regularity} that that the extremiser in $d=1$ can however be achieved by sequences of functions in Schwartz space $\mathcal{S}(\mathbb{R})$.

Our interest is in the asymptotic behaviour of $\mathbf{A}_{\pm}(d)$ as $d \to \infty$. It is known from \cite{bourgain2010principe} that \begin{align*}
    \frac1{\sqrt{2\pi e}}\le \liminf_{d \to \infty} \frac{\mathbf{A}_+(d)}{\sqrt{d}}\le \limsup_{d \to \infty}\frac{\mathbf{A}_+(d)}{\sqrt{d}}\le \frac1{\sqrt{2\pi }},
\end{align*} and to date this seems to be the best asymptotic bounds. Cohn and  Gon\c calves conjecture in \cite{cohn2019optimal} that the limit $\lim_{d \to \infty}\frac{\mathbf{A}_+(d)}{\sqrt{d}}$ exists, and provide some numerical evidence to support this conjecture. Our first result is an improvement of the asymptotic lower bound for the $\pm 1$ sign uncertainty principle, motivated by  certain Fourier inequalities. The results is as follows:
\begin{theorem}
For $s \in \{\pm 1\}$, let $\mathbf{A}_s(d)$ be as defined previously. Then for all $d\ge 5$, \begin{align*}
    \frac{\mathbf{A}_s(d)}{\sqrt{d}}\ge \frac1{2\sqrt{\pi}}\approx 0.282095.
\end{align*}
    \label{bodya}
\end{theorem}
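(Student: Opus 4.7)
My plan is to reduce the theorem to a statement about $\pm 1$ eigenfunctions of the Fourier transform and then exploit an orthogonality identity obtained by pairing against the first radial Laguerre--Gaussian $\phi_1(x)=(d/2-2\pi|x|^2)e^{-\pi|x|^2}$, whose vanishing sphere is precisely $\{|x|=\rho\}$ for $\rho:=\sqrt{d/(4\pi)}$.

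\textbf{Reduction.} Since the hypotheses of the theorem and the functional $A(\cdot)A(\widehat\cdot)$ are both invariant under rotations, spherical averaging lets me assume $f$ is radial; the rescaling $x\mapsto\lambda x$ (combined with the equivalence recalled in the introduction between the infimum over $\mathcal A_\pm(d)$ and the minimum value of $A(g)$ over nonzero radial $\pm 1$ Fourier eigenfunctions $g$ with $g(0)=0$) reduces the theorem to proving: for every nonzero radial $g\in L^1(\mathbb R^d)$ with $\widehat g=sg$ (where $s\in\{\pm 1\}$), $g(0)=0$ and $g\geq 0$ on $\{|x|\geq r\}$ with $r:=A(g)$, one has $r^2\geq\rho^2$.

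\textbf{Key Fourier identity.} Since $\widehat{\phi_1}=-\phi_1$, orthogonality of distinct Fourier eigenspaces in $L^2(\mathbb R^d)$ gives $\int g\phi_1\,dx=0$ for every $+1$ eigenfunction $g$, and rearranging this yields the weighted moment identity
$$\int_{\mathbb R^d} g(x)\bigl(|x|^2-\rho^2\bigr)e^{-\pi|x|^2}\,dx=0.$$
For the $-1$ case an analogous identity is obtained by pairing $g$ against the $+1$ eigenfunctions $\phi_0=e^{-\pi|x|^2}$ and $\phi_2$, using $\int g\,\phi_0\,dx=0$ to cancel the constant term.

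\textbf{Sign analysis.} Arguing by contradiction, assume $r<\rho$. I then split the integral into the three regions $B_r$, $B_\rho\setminus B_r$, and $B_\rho^c$. On $B_\rho^c\subset\{|x|\geq r\}$ both $g\geq 0$ and $|x|^2-\rho^2\geq 0$, so the integrand is non-negative; on $B_\rho\setminus B_r$ one has $g\geq 0$ but $|x|^2-\rho^2<0$, so the integrand is non-positive; on $B_r$ the function $g$ has mixed sign and the integrand has indefinite sign. The vanishing of the total integral forces the three contributions to cancel.

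\textbf{Main obstacle.} The delicate step is controlling the $B_r$-contribution, since only the global constraint $\int g\,dx=0$ is available and no pointwise bound on $g$ inside $B_r$ is known beyond $g(0)=0$. To close the argument I would enrich the family of identities by introducing the one-parameter family of $-1$ eigenfunctions $\psi_\lambda:=\phi_1^{(\lambda)}-\widehat{\phi_1^{(\lambda)}}$, where $\phi_1^{(\lambda)}(x):=(d/2-2\pi\lambda|x|^2)e^{-\pi\lambda|x|^2}$, each of which is $L^2$-orthogonal to $g$ and whose sign-change radius approaches $\rho$ as $\lambda\to 1$. Optimizing the parameter $\lambda$ should produce a non-negative dual kernel supported essentially on $B_\rho^c$ whose integral against $g$ must vanish; since $g\geq 0$ on $B_\rho^c\subset\{|x|\geq r\}$, this forces $g$ to vanish on an open subset of $B_\rho^c$, and the real-analyticity of the Schwartz $\pm 1$ Fourier eigenfunction $g$ then gives $g\equiv 0$, contradicting $g\not\equiv 0$. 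The hypothesis $d\geq 5$ enters when verifying that the optimal dual kernel is genuinely non-negative on all of $B_\rho^c$; in lower dimensions additional correction terms from higher Laguerre--Gaussians would be needed.
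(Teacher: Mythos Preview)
Your approach is entirely different from the paper's. The paper deduces the theorem from the Fourier inequality $\lVert f\rVert_{L^2}^2\le (2/e)^{d/2}\lVert f\rVert_{L^1}\bigl\lVert\widehat f\bigr\rVert_{L^1}$ (proved via Beckner's sharp Hausdorff--Young inequality together with log-convexity of $L^p$ norms), combined with Cauchy--Schwarz on $\int_{B_{A(g)}}g_-=\tfrac12$; the restriction $d\ge 5$ enters only at the very end, through the elementary check that $\sqrt{e/(2\pi)}\bigl(\Gamma(d/2+1)/4\bigr)^{1/d}\ge\sqrt{d}/(2\sqrt\pi)$ for $d\ge 5$.

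Your proposal, by contrast, has a gap you do not close. The identity $\int g\,\phi_1=0$ for $+1$ eigenfunctions is correct, and the sign-change radius $\rho=\sqrt{d/(4\pi)}$ is indeed exactly the target, but---as you yourself flag under ``Main obstacle''---the sign analysis is inconclusive on $B_r$, so nothing has yet been proved. The proposed remedy, optimizing over the family $\psi_\lambda$ to manufacture a non-negative $-1$ eigenfunction ``supported essentially on $B_\rho^c$'', is not carried out and in fact cannot work as described: any non-negative $-1$ eigenfunction $\psi$ satisfies $\int\psi=\widehat\psi(0)=-\psi(0)\le 0$, forcing $\psi\equiv 0$. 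Even if one weakened ``supported essentially'' to something fuzzier, your concluding step appeals to real-analyticity of $g$, which is unjustified: $g$ is only assumed to lie in $L^1$ with $\widehat g\in L^1$, which gives continuity but not analyticity, so vanishing on an open set does not propagate. The $-1$ case is likewise only gestured at; pairing against $\phi_0$ and $\phi_2$ yields identities whose nodal radii are not $\rho$, and you do not explain how to combine them. In its present form the argument does not go beyond the Bourgain--Clozel--Kahane test-function mechanism, and a genuinely new input is required to reach $1/(2\sqrt\pi)$.
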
 This theorem is in turn proven by considering certain inequalities relating the $L^2$ norm of a function $f$ to $\lVert f\rVert_{L^1}$ and $\bigl\lVert \widehat{f}\bigr\rVert_{L^1}$. Specifically, we use the following theorem to prove Theorem \ref{bodya}:
\begin{theorem}
Let $\mathcal{H}^d$ be the subspace of $L^1\left(\mathbb{R}^d\right)$ defined by \begin{align}
\mathcal{H}^d\coloneqq \left\{f \in L^1\mathopen{}\left(\mathbb{R}^d\right)\mathclose{}: \widehat{f} \in L^1\mathopen{}\left(\mathbb{R}^d\right)\mathclose{}\right\}.
    \label{mathcalHdef}
\end{align}
Then
\begin{align*}
        \left\lVert f\right\rVert_{L^2}^2\le \left(\frac2{e}\right)^{\frac{d}{2}}\left\lVert f\right\rVert_{L^1} \big\lVert \widehat{f}\big\rVert_{L^1},
    \end{align*} for all $f \in \mathcal{H}^d$.
    \label{mainthm1}
 \end{theorem}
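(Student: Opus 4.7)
The plan is to establish the inequality via a Gaussian-weighted Fourier representation followed by optimization of a scale parameter. By a standard approximation argument, it suffices to prove the bound for Schwartz $f \in \mathcal{S}(\mathbb{R}^d)$, for which all integrals below converge absolutely; the general case then follows by density.

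The central identity, derived by applying Fourier inversion to each factor of $|f|^2 = f\bar f$, swapping orders of integration, and evaluating the resulting Gaussian integral in $x$, is
\begin{equation*}
\int_{\mathbb{R}^d} |f(x)|^2 e^{-\pi t|x|^2}\, dx = t^{-d/2} \iint_{\mathbb{R}^d\times \mathbb{R}^d} \widehat f(\omega)\,\overline{\widehat f(\eta)}\,e^{-\pi|\omega-\eta|^2/t}\, d\omega\, d\eta, \quad t > 0.
\end{equation*}
Majorizing the integrand by $|\widehat f(\omega)|\,|\widehat f(\eta)|$ and using $e^{-\pi|\omega-\eta|^2/t}\le 1$ gives the weighted estimate $\int |f|^2 e^{-\pi t|x|^2}\, dx \le t^{-d/2}\|\widehat f\|_{L^1}^2$; the symmetric argument on the Fourier side yields $\int |\widehat f|^2 e^{-\pi s|\omega|^2}\, d\omega \le s^{-d/2}\|f\|_{L^1}^2$.

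To control $\|f\|_{L^2}^2 = \int|\widehat f|^2\, d\omega$, I would decompose with respect to the Gaussian partition $1 = e^{-\pi s|\omega|^2} + (1 - e^{-\pi s|\omega|^2})$. The first piece is controlled by $s^{-d/2}\|f\|_{L^1}^2$ via the Fourier-side estimate above, and the complementary tail piece is bounded by combining the pointwise estimate $|\widehat f|\le \|f\|_{L^1}$ with the dual weighted identity on the spatial side, coupling the parameters $s$ and $t$. This produces a two-parameter family of bounds of the form $\|f\|_{L^2}^2 \le A(s,t)\,\|f\|_{L^1}\,\|\widehat f\|_{L^1}$.

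The main obstacle is the optimization of this family. A naive one-parameter optimization only recovers the trivial constant $1$; extracting precisely $(2/e)^{d/2} = 2^{d/2} e^{-d/2}$ requires simultaneously balancing the spatial and Fourier Gaussian scales. The form of this constant is consistent with the extremum $\max_{t > 0} t^{d/2} e^{-td/2} = e^{-d/2}$, attained at $t = 1$, combined with a factor $2^{d/2}$ reflecting the natural Gaussian normalization on $\mathbb{R}^d$. Verifying that the optimization yields exactly this constant, rather than a slightly weaker intermediate constant, is the technical heart of the argument.
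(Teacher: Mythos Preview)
Your proposal has a genuine gap: the optimization step you flag as ``the technical heart of the argument'' is not carried out, and as written the method cannot produce the constant $(2/e)^{d/2}$. Following your own decomposition,
\[
\lVert f\rVert_{L^2}^2 = \int |\widehat f|^2 e^{-\pi s|\omega|^2}\,d\omega \;+\; \int |\widehat f|^2\bigl(1-e^{-\pi s|\omega|^2}\bigr)\,d\omega
\;\le\; s^{-d/2}\lVert f\rVert_{L^1}^2 \;+\; \lVert f\rVert_{L^1}\lVert \widehat f\rVert_{L^1},
\]
where the tail is bounded by $|\widehat f|\le\lVert f\rVert_{L^1}$. The second summand already carries constant $1$ in front of $\lVert f\rVert_{L^1}\lVert \widehat f\rVert_{L^1}$, and the first summand is non-negative, so no choice of $s$ (or of a second parameter $t$ on the spatial side) can push the overall constant below $1$. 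Your remark that ``a naive one-parameter optimization only recovers the trivial constant $1$'' is in fact the endpoint of this approach, not a deficiency of a naive version of it. The heuristic that $\max_{t>0} t^{d/2}e^{-td/2}=e^{-d/2}$ ``is consistent with'' the target constant is numerology, not a mechanism; nothing in your scheme forces that particular extremal problem to appear.

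The paper's proof is completely different and relies on a genuinely sharp input you do not invoke: Beckner's sharp Hausdorff--Young inequality. One writes $\lVert f\rVert_{L^2}^2\le \lVert f\rVert_{L^p}\lVert f\rVert_{L^{p^*}}$ by H\"older, applies Beckner to replace $\lVert f\rVert_{L^{p^*}}$ by $\bigl(p^{1/p}/(p^*)^{1/p^*}\bigr)^{d/2}\lVert \widehat f\rVert_{L^{p}}$, and then uses log-convexity of $L^p$ norms to bound $\lVert f\rVert_{L^{p_\theta}}\lVert \widehat f\rVert_{L^{p_\theta}}$ in terms of $\bigl(\lVert f\rVert_{L^1}\lVert \widehat f\rVert_{L^1}\bigr)^{1-\theta}\bigl(\lVert f\rVert_{L^2}^2\bigr)^{\theta}$. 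This yields a self-referential inequality for $\lVert f\rVert_{L^2}^2$; solving it and sending $\theta\to 1^-$ produces exactly $(2/e)^{d/2}$, the constant arising as $\exp\bigl(-\tfrac12\partial_\theta\log(p_\theta^{1/p_\theta}/(p_\theta^*)^{1/p_\theta^*})\big|_{\theta=1}\bigr)$. The sharp Beckner constant is essential here; without it one is back to the trivial bound $\lVert f\rVert_{L^2}^2\le \lVert f\rVert_{L^1}\lVert \widehat f\rVert_{L^1}$ that your Gaussian argument also recovers.
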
 
 Theorem \ref{bodya} follows quite directly from Theorem \ref{mainthm1}, and we present its proof below:
\begin{proof}[Proof of Theorem \ref{bodya} under Theorem \ref{mainthm1}]
     As noted earlier, we can assume $g$ is an eigenfunction in either of Problems \ref{dancefloor}, \ref{dancefloor2}, specifically $g=s\widehat{g}$. We write $g$ as $g=g_+-g_-$, where $g_\sigma=\max\left(\sigma f,0\right)$, $\sigma \in \{\pm 1\}$. Furthermore, we may also take $g(0)=0$ in the $+1$ uncertainty principle case, because if $g(0)\le 0$, then $x \mapsto g(x)-g(0)e^{-\pi |x|^2}$ is a $+1$ eigenfunction, and \begin{align*}
    A\left(x \mapsto g(x)-g(0)e^{-\pi |x|^2}\right)\ge A(g).
\end{align*}
We also normalise $g$ so $\left|\lvert g\right\rVert_{L^1}=1$, so the condition $\widehat{g}(0)=\int_{\mathbb{R}^d} g=0$ implies \begin{align*}
        \int_{\mathbb{R}^d}g_-=\int_{\mathbb{R}^d}g_+=\frac1{2}.
    \end{align*} Note $\supp g_-\subset B_{A(g)}$ ($B_r$ denotes the ball of radius $r$ in the relevant dimension). Applying Cauchy-Schwarz to the equality $\frac1{2}=\int_{B_{A(g)}}g_{-}$, we get \begin{align}
        \frac1{4}=\left(\int_{B_{A(g)}}g_-\right)^2\le \left(\int_{B_A(g)}\left|g_-\right|^2\right)\left(\int_{B_{A(g)}}1\right)\le \left|B_1\right|A(g)^d\int_{B_{A(g)}}|g|^2,
        \label{love} 
    \end{align} so \begin{align}
        \frac1{4}\le \lVert g\rVert_{L^2}^2\left|B_1\right|A(g)^d\le \left(\frac2{e}\right)^{\frac{d}{2}}\lVert g\rVert_{L^1}^2\left|B_1\right|A(g)^d=\left(\frac2{e}\right)^{\frac{d}{2}}\left|B_1\right|A(g)^d,
        \label{ayos}
    \end{align} from Theorem \ref{mainthm1}. Taking the infimum over all valid functions $g$ and rearranging, we get \begin{align}
        \mathbf{A}_s(d)^d\left(\frac2{e}\right)^{\frac{d}{2}}\ge \frac{\left|B_1\right|^{-1}}{4}=\frac{\Gamma\left(\frac{d}{2}+1\right)}{4\pi^{\frac{d}{2}}},
        \label{ricflair}
    \end{align} so \begin{align}
        \mathbf{A}_s(d)\ge \sqrt{\frac{e}{2\pi }}\left( \frac{\Gamma\left(\frac{d}{2}+1\right)}{4}\right)^{\frac1{d}}.
        \label{matriculate}
    \end{align} From Stirling's approximation, \begin{align*}
         \Gamma\left(\frac{d}{2}+1\right)\sim \sqrt{\pi d}\left(\frac{d}{2 e}\right)^{\frac{d}{2}},
    \end{align*} so this shows \begin{align*}
         \liminf_{d \to \infty}\frac{\mathbf{A}_s(d)}{\sqrt{d}}\ge \frac1{2\sqrt{\pi}}.
    \end{align*} More concretely, one can check that \begin{align*}
         \sqrt{\frac{e}{2\pi }}\left( \frac{\Gamma\left(\frac{d}{2}+1\right)}{4}\right)^{\frac1{d}}\ge \frac{\sqrt{d}}{2\sqrt{\pi }}
    \end{align*} for $d\ge 5$, proving Theorem \ref{bodya}.
\end{proof}
As stated earlier, there is a connection between Problem \ref{dancefloor2} and sphere packing in Euclidean space, via linear programming bounds, originally formulated in Theorem 3.2 in \cite{cohn2003new}, which provide an upper bound for the sphere packing density in $\mathbb{R}^d$.
\begin{theorem}[Theorem 3.2 in \cite{cohn2003new}, Theorem 3.3 in \cite{Cohn_2014}]
     Suppose $f\colon \mathbb{R}^d \to \mathbb{R}$ is continuous, positive-definitive  and integrable, satisfying
the following three conditions:\begin{enumerate}[i]
    \item \label{uno} $f(0)=\widehat{f}(0)>0$.
    \item \label{dos} $f(x)\le 0$ for all $|x|\ge r$, and 
    \item \label{tres} $\widehat{f}(t)\ge 0$ for all $t$ (positive-definiteness).
\end{enumerate}
Then the density of sphere packings in $\mathbb{R}^d$ is bounded above by $\left|B_1\right|\left(\frac{r}{2}\right)^d$.
\label{lpbounds}
\begin{remark1}
The original formulation in Theorem 3.2 in \cite{cohn2003new} imposed some decay conditions on $f$ and $\widehat{f}$, but this was later relaxed to $f$ simply being continuous, integrable and positive-definite in Theorem 3.3 in \cite{Cohn_2014}.
    The exact conclusion of Theorem 3.2 in \cite{cohn2003new} under the conditions \ref{uno},\ref{dos},\ref{tres} is that the centre density of any packing is bounded above by $\left(\frac{r}{2}\right)^d$. However, the centre density is simply the packing density divided by $\left|B_1\right|$, the volume of the unit ball, so the formulation above holds.
\end{remark1}
\end{theorem}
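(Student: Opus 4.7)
The plan is to execute the classical Cohn-Elkies Poisson-summation argument. First I would reduce to periodic packings: by a standard density approximation, the sphere packing density in $\mathbb{R}^d$ is the supremum over configurations of the form $\Lambda + \{v_1,\dots,v_N\}$, with $\Lambda \subset \mathbb{R}^d$ a lattice and $v_1,\dots,v_N$ coset representatives whose mutual $\Lambda$-translates have pairwise separations at least $r$. The density of such a packing equals $N|B_{r/2}|/|\mathbb{R}^d/\Lambda| = N|B_1|(r/2)^d/|\mathbb{R}^d/\Lambda|$, so the theorem reduces to the estimate $N \le |\mathbb{R}^d/\Lambda|$.

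The heart is to evaluate the double lattice sum
\[
S \;=\; \sum_{j,k=1}^N \sum_{\ell \in \Lambda} f(v_j - v_k + \ell)
\]
in two ways. For an upper bound, isolate the diagonal terms $(j=k,\ \ell=0)$, contributing $Nf(0)$. Every remaining vector $v_j - v_k + \ell$ is the difference of two distinct packing centers, hence has norm at least $r$, and condition (ii) forces $f \le 0$ on each such term; thus $S \le Nf(0)$. For the matching lower bound, apply the Poisson summation formula (in the normalisation used in the paper) to the inner sum, producing
\[
S \;=\; \frac{1}{|\mathbb{R}^d/\Lambda|} \sum_{m \in \Lambda^*} \widehat{f}(m) \left|\sum_{j=1}^N e^{2\pi i \langle m, v_j\rangle}\right|^2,
\]
where $\Lambda^*$ is the dual lattice. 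Condition (iii) makes every term on the right non-negative, so truncating to $m=0$ yields $S \ge N^2 \widehat{f}(0)/|\mathbb{R}^d/\Lambda|$. Combining the two bounds and invoking $f(0) = \widehat{f}(0) > 0$ from condition (i) produces $N \le |\mathbb{R}^d/\Lambda|$, exactly as required.

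The main obstacle is justifying the pointwise Poisson summation identity under the weak hypotheses of Theorem 3.3 of \cite{Cohn_2014}: $f$ is only continuous, integrable, and positive-definite, which does not on its own force enough decay of either $f$ or $\widehat{f}$ for both sides to converge absolutely in the naive pointwise sense. The standard workaround is a Gaussian regularisation: replace $f$ by $f_\varepsilon(x) = f(x) e^{-\varepsilon\pi|x|^2}$, which has genuine Gaussian decay (using the elementary bound $|f(x)| \le f(0)$ available for continuous positive-definite $f$), and whose Fourier transform $\widehat{f_\varepsilon} = \widehat{f} * \widehat{G_\varepsilon}$ is a convolution of non-negatives, hence still non-negative, and Schwartz-class. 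The clean argument then applies to $f_\varepsilon$, whose two lattice sums converge absolutely; one concludes by passing $\varepsilon \to 0$, controlling the negative geometric side via dominated convergence against $f(0)$ and the positive Poisson side via Fatou's lemma, with continuity ensuring that $f_\varepsilon(0) = f(0)$ and $\widehat{f_\varepsilon}(0) \to \widehat{f}(0)$. This recovers the stated inequality, and the periodic-packing reduction from the first step then delivers the general bound.
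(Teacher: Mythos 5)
This theorem is quoted from \cite{cohn2003new} and \cite{Cohn_2014} and the paper supplies no proof of its own, so there is nothing internal to compare against; your argument is precisely the standard Cohn--Elkies proof from those sources (reduction to periodic packings, the two-sided evaluation of the double lattice sum via Poisson summation, and Gaussian mollification to reach the weakened hypotheses of Theorem 3.3 of \cite{Cohn_2014}), and it is correct. The one small imprecision is the claim that $\widehat{f_\varepsilon}=\widehat{f}*\widehat{G_\varepsilon}$ is Schwartz --- a convolution of an $L^1$ function with a Gaussian need not decay rapidly; the justification actually used for pointwise Poisson summation is that the $\Lambda$-periodization of $f_\varepsilon$ converges uniformly (via $|f(x)|\le f(0)$ and the Gaussian factor) to a continuous function on the torus with non-negative Fourier coefficients, which therefore has an absolutely convergent Fourier series.
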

While the linear program above is important as a tool in the sphere packing problem, determining the optimal value of $r$ can be interesting in its own right, both as a problem in Fourier analysis,  and in determining how feasible this method is in solving sphere packing. For example, using a \emph{dual linear program} \cite{cohn2022dual} to the one above formulated above, Li \cite{li2025dual} was able to show the linear programming bounds are unable to prove the optimality of the best packings in dimensions 3, 4, 5.
Part of the motivation for Theorem \ref{mainthm1} was in developing hopefully improved lower bounds for the Cohn-Elkies linear program. For clarity, by the `Cohn-Elkies linear program', we refer to the following optimisation problem:
\begin{problem}
    Minimise $\left|B_1\right|\left(\frac{r}{2}\right)^d$ over all continuous, integrable functions $f \colon \mathbb{R}^d \to \mathbb{R}$ satisfying conditions \ref{uno}, \ref{dos} and \ref{tres}.
\end{problem}
Let $\Delta_{d}^{LP}$ denote this infimum, so \begin{align*}
    \Delta_d^{LP}=\inf\left\{\left|B_1\right|\left(\frac{r}{2}\right)^d: f\colon \mathbb{R}^d \to \mathbb{R} \text{ is continuous, integrable, and satisfies \ref{uno},\ref{dos} and \ref{tres}}\right\}.
\end{align*}
One consequence of Theorem \ref{mainthm1} is as follows: Suppose $f$ satisfies the constraints given in Theorem \ref{lpbounds}. Then $g=\widehat{f}-f$ is a non-zero $-1$ eigenfunction that is eventually non-negative, and $A(g)\le r$, since $g(x)\ge 0$ if $|x|\ge r$. Consequently, $\left|B_1\right|\left(\frac{r}{2}\right)^d\ge \left|B_1\right|\left(\frac{A(g)}{2}\right)^d\ge \left|B_1\right|\left(\frac{\mathbf{A}_{-}(d)}{2}\right)^d$. Taking the infimum over valid functions $f$ implies \begin{align}
    \Delta_d^{LP}\ge \left|B_1\right|\left(\frac{\mathbf{A}_{-}(d)}{2}\right)^d.
    \label{cominghome}
\end{align} From \eqref{ricflair}, this implies  
\begin{align}
    \Delta_d^{LP}\ge \frac1{4}\left(\frac{e}{8}\right)^{\frac{d}{2}}.
    \label{DeltaLPbounds}
\end{align}
One can note the decay of the lower bound for $\Delta_d^{LP}$ in \eqref{DeltaLPbounds} has the same dominant growth rate as the Torquato-Stillinger conjectured lower bounds for the Cohn-Elkies linear program given in \cite{torquato2006new}, Eqn (116).

One place we believe our bounds can be improved is in the sharpness of the constant in Theorem \ref{mainthm1}. One can observe by analysing the proof of Theorem \ref{bodya} given above that we really only need to consider radial functions in the context of Theorem \ref{mainthm1}. To that end, we present two related problems:
\begin{problem}[General case]
    For each dimension $d\ge 1$, determine \begin{align}
         w_d\coloneqq \sup_{f \in \mathcal{H}^d\setminus \{\mathbf{0}\}}\frac{\lVert f\rVert_{L^2}^2}{\lVert f\rVert_{L^1}\bigl\lVert \widehat{f}\bigr\rVert_{L^1}}.
         \label{ratio1}
    \end{align}
    \label{heaven1}
\end{problem}
As noted earlier, for the purposes of the sign uncertainty principles, we could restrict our attention to radial real-valued functions, which leads to the radial consideration of the problem above.
\begin{problem}[Radial case]
    Let $L^1_{\textup{rad}}\left(\mathbb{R}^d\right)$ denote the space of radial real-valued functions on $\mathbb{R}^d$. For each dimension $d\ge 1$, determine \begin{align}
        w^{\textup{rad}}_d\coloneqq \sup_{f \in \mathcal{H}^d\cap L^1_{\textup{rad}}\left(\mathbb{R}^d\right)\setminus \{\mathbf{0}\}}\frac{\lVert f\rVert_{L^2}^2}{\lVert f\rVert_{L^1}\bigl\lVert \widehat{f}\bigr\rVert_{L^1}}.
        \label{ratio2}
    \end{align}
    \label{heaven2}
\end{problem}
Some things of note here. We initially thought that like seemingly similar inequalities in Fourier analysis, like the Hausdorff-Young inequality (\cite{beckner1975inequalities}, Theorem $1$), Gaussian functions ($x \mapsto  e^{-\pi |x|^2}$) would extremise the ratios in \eqref{ratio1},\eqref{ratio2}. This however turns out to be false. From \eqref{cominghome}, one can deduce that\begin{align}
    \Delta_{d}^{LP}\ge \frac1{4}\cdot \frac1{2^d}w_{d}^{-1},
    \label{bebe}
\end{align} where $w_d$ is as defined in \eqref{ratio1}. However, Cohn and Zhao in \cite{Cohn_2014} show the Cohn-Elkies Linear programming bound can be optimised to match the Kabatiansky-Levenshtein upper bound for sphere packing \cite{kabatiansky1978bounds}, which is asymptotically of the form $2^{-d\left(0.599...+o(1)\right)}$, which implies
\begin{align*}
    \Delta_d^{LP}\le 2^{-d\left(0.599...+o(1)\right)}.
\end{align*} Applied to \eqref{bebe}, this results in a bound of the form \begin{align*}
    w_d\ge 2^{\left(0.599....-1-o(1)\right)d},
\end{align*} or $w_d^{\frac1{d}}\ge 0.7573...$, asymptotically. As well as giving a lower bound for $w_d$, this also shows that Gaussians cannot be optimal in either of Problems \ref{heaven1}, \ref{heaven2}, as \begin{align*}
    \frac{\lVert x\mapsto e^{-\pi |x|^2}\rVert_{L^2}^2}{\bigl\lVert  x\mapsto e^{-\pi |x|^2}\bigr\rVert_{L^1}^2}=2^{-\frac{d}{2}}.
\end{align*}

As will be demonstrated in the subsequent section, our proof relies on establishing bounds on $\lVert \mathfrak{i}_s\rVert_{L^p \to L^1}$, where $\mathfrak{i}_s$ is the inclusion operator acting on radial $s$-eigenfunctions of the Fourier transform. We currently do not know how to directly bound this operator norm, and instead establish bounds for the ratio \begin{align*}
    \frac{\left\lVert f\right\rVert_{L^p}\bigl\lVert \widehat{f}\bigr\rVert_{L^p}}{\left\lVert f\right\rVert_{L^1}\bigl\lVert \widehat{f}\bigr\rVert_{L^1}},
\end{align*}  for functions $f \in \mathcal{H}^d$. We subsequently present the proof of Theorem \ref{mainthm1} in Section \ref{sec3} below.

\section{Proof of Theorem \ref{mainthm1}}\label{sec3}
We start by noting for any $f \in \mathcal{H}^d$,  by H\"{o}lder's inequality, \begin{align}
    \lVert f\rVert_{L^2}^2=\int_{\mathbb{R}^d}|f|^2=\lVert f\cdot f\rVert_{L^1}\le \lVert f\rVert_{L^p}\lVert f\rVert_{L^{p^*}},
    \label{evans}
\end{align} where $p^*$ denotes the H\"{o}lder conjugate of $p$: $\frac1{p}+\frac1{p^*}=1$.
At this point, we invoke the Hausdorff-Young inequality. The strongest form, due to Beckner, specifically Theorem $1$ in \cite{beckner1975inequalities}, is as follows. For each $p \in [1,2]$ and all $f \in L^1\left(\mathbb{R}^d\right)\cap L^2\left(\mathbb{R}^d\right)$, we have the following inequality: \begin{align*}
    \bigl\lVert \widehat{f}\bigl\rVert_{L^{p^*}}\le \left(\frac{p^{\frac1{p}}}{\left(p^*\right)^{\frac1{p^*}}}\right)^{\frac{d}{2}}\lVert f\rVert_{L^p},
\end{align*} with equality achieved by a Gaussian. Taking $p \in[1,2]$ and applying this to \eqref{evans}, we get \begin{align}
    \lVert f\rVert_{L^2}^2\le \left(\frac{p^{\frac1{p}}}{\left(p^*\right)^{\frac1{p^*}}}\right)^{\frac{d}{2}}\lVert f\rVert_{L^p}\bigl\lVert \widehat{f}\bigr\rVert_{L^p}.
    \label{ciara}
\end{align} For each $\theta \in [0,1]$, set \begin{align}
     \frac1{p_\theta}=1-\frac{\theta}{2},
      \label{pthetadef}
 \end{align} so $\frac1{p_\theta}$ is linearly interpolating (in $\theta$) between $1$ and $\frac1{2}$.  Note $p_\theta \in [1,2]$, so letting $p=p_\theta$ in \eqref{ciara}, we get \begin{align}
     \lVert f\rVert_{L^2}^2\le \left(\frac{p_\theta^{\frac1{p_\theta}}}{\left(p_\theta^*\right)^{\frac1{p_\theta^*}}}\right)^{\frac{d}{2}}\lVert f\rVert_{L^{p_\theta}}\bigl\lVert \widehat{f}\bigr\rVert_{L^{p_\theta}}.
     \label{Walton}
\end{align}
Under this definition, $L^{p_\theta}$ norms are log-convex, that is \begin{align*}
    \lVert g\rVert_{L^{p_\theta}}\le \lVert g\lVert_{L^{1}}^{1-\theta}\lVert g\rVert_{L^{2}}^{\theta},
\end{align*} for all $\theta$, and functions $g$ (for which the right-hand side is finite). From log-convexity, for $\theta \in [0,1]$, \begin{align*}
    \lVert f\rVert_{L^{p_\theta}} \bigl\lVert \widehat{f}\bigr\rVert_{L^{p_\theta}}&\le \lVert f\rVert_{L^1}^{1-\theta}\lVert f \rVert_{L^2}^{\theta}\bigl\lVert \widehat{f}\bigr\rVert_{L^1}^{1-\theta}\bigl\lVert \widehat{f}\bigr\rVert_{L^2}^{\theta} =\left(\lVert f\rVert_{L^1}\bigl\lVert \widehat{f}\bigr\rVert_{L^1}\right)^{1-\theta}\left(\lVert f\rVert_{L^2}\bigl\lVert \widehat{f}\bigr\rVert_{L^2}\right)^{\theta}.
\end{align*} Combining this with \eqref{Walton}, we get \begin{align*}
    \lVert f\rVert_{L^{p_\theta}} \bigl\lVert \widehat{f}\bigr\rVert_{L^{p_\theta}}\le \left(\lVert f\rVert_{L^1}\bigl\lVert \widehat{f}\bigr\rVert_{L^1}\right)^{1-\theta}\left(\frac{p_\theta^{\frac1{p_\theta}}}{\left(p_\theta^*\right)^{\frac1{p_\theta^*}}}\right)^{\frac{\theta d}{2}}   \left(\lVert f\rVert_{L^{p_\theta}} \bigl\lVert \widehat{f}\bigr\rVert_{L^{p_\theta}}\right)^{\theta},
\end{align*} so \begin{align*}
    \left(  \lVert f\rVert_{L^{p_\theta}} \bigl\lVert \widehat{f}\bigr\rVert_{L^{p_\theta}}\right)^{1-\theta}\le \left(\lVert f\rVert_{L^1}\bigl\lVert \widehat{f}\bigr\rVert_{L^1}\right)^{1-\theta}\left(\frac{p_\theta^{\frac1{p_\theta}}}{\left(p_\theta^*\right)^{\frac1{p_\theta^*}}}\right)^{\frac{\theta d}{2}} ,
\end{align*} which implies \begin{align}
    \lVert f\rVert_{L^{p_\theta}} \bigl\lVert \widehat{f}\bigr\rVert_{L^{p_\theta}}\le \left(\left(\frac{p_\theta^{\frac1{p_\theta}}}{\left(p_\theta^*\right)^{\frac1{p_\theta^*}}}\right)^{\frac{\theta}{2(1-\theta)}}\right)^d\lVert f\rVert_{L^1}\bigl\lVert \widehat{f}\bigr\rVert_{L^1}.
    \label{babya}
\end{align} We now let $\theta \to 1^-$. From \eqref{pthetadef}, $p_\theta=\frac2{2-\theta}$, and $\frac1{p_\theta^*}=\frac{\theta}{2}$, so \begin{align*}
    \frac{p_\theta^{\frac1{p_\theta}}}{\left(p_\theta^*\right)^{\frac1{p_\theta^*}}}=\left(\frac2{2-\theta}\right)^{\frac{2-\theta}{2}}\cdot \left(\frac{\theta}{2}\right)^{\frac{\theta}{2}},
\end{align*} so \begin{align*}
    \frac1{2(\theta-1)}\log\left( \frac{p_\theta^{\frac1{p_\theta}}}{\left(p_\theta^*\right)^{\frac1{p_\theta^*}}}\right)&=\frac1{2(\theta-1)}\left(\frac{2-\theta}{2}\log\left(\frac2{2-\theta}\right)+\frac{\theta}{2}\log\left(\frac{\theta}{2}\right)\right) \\
    &=\frac1{2(\theta-1)}\left(\frac{\theta}{2}\log\left(\frac{\theta}{2}\right)-\frac{2-\theta}{2}\log\left(\frac{2-\theta}{2}\right)\right).
\end{align*} We now evaluate the limit as $\theta \to  1^-$. Differentiating, \begin{align*}
    \partial_\theta\left(\frac{\theta}{2}\log\left(\frac{\theta}{2}\right)-\frac{2-\theta}{2}\log\left(\frac{2-\theta}{2}\right)\right)&=\frac1{2}\log\left(\frac{\theta}{2}\right)
    +\frac1{2}+\frac1{2}\log\left(\frac{2-\theta}{2}\right)-\frac{2-\theta}{2}\cdot \frac{-1}{2-\theta} \\
    &=1+\frac1{2}\log\left(\frac{\theta}{2}\right)+\frac1{2}\log\left(\frac{2-\theta}{2}\right).
\end{align*} Evaluating at $\theta=1$, we see that \begin{align*}
    \lim_{\theta \to 1^{-}}\left( \frac1{2(\theta-1)}\log\left( \frac{p_\theta^{\frac1{p_\theta}}}{\left(p_\theta^*\right)^{\frac1{p_\theta^*}}}\right)\right)=\frac{1-\log 2}{2},
\end{align*} which implies \begin{align*}
    \lim_{\theta \to 1^{-}}\left(\left(\frac{p_\theta^{\frac1{p_\theta}}}{\left(p_\theta^*\right)^{\frac1{p_\theta^*}}}\right)^{\frac{\theta}{2(1-\theta)}}\right)=\exp\left(-\frac{1-\log 2}{2}\right)=\sqrt{\frac2{e}}.
\end{align*} So, letting $\theta \to 1^{-}$ in both sides of \eqref{babya}, $p_\theta \to 2$ from \eqref{pthetadef}, so we get \begin{align*}
    \lVert f\rVert_{L^2}^2\le \left(\frac2{e}\right)^{\frac{d}{2}}\lVert f\rVert_{L^1}\bigl\lVert \widehat{f}\bigr\rVert_{L^1},
\end{align*} for all $f \in \mathcal{H}^d$, as desired. This completes the proof of Theorem \ref{mainthm1}.

 { \small 
	\bibliographystyle{plain}
	\bibliography{bib.bib} }

\begin{thebibliography}{10}

\bibitem{beckner1975inequalities}
William Beckner.
\newblock Inequalities in {F}ourier analysis.
\newblock {\em Ann. of Math. (2)}, 102(1):159--182, 1975.

\bibitem{bourgain2010principe}
Jean Bourgain, Laurent Clozel, and Jean-Pierre Kahane.
\newblock Principe d'{H}eisenberg et fonctions positives.
\newblock {\em Ann. Inst. Fourier (Grenoble)}, 60(4):1215--1232, 2010.

\bibitem{cohn2003new}
Henry Cohn and Noam Elkies.
\newblock New upper bounds on sphere packings. {I}.
\newblock {\em Ann. of Math. (2)}, 157(2):689--714, 2003.

\bibitem{cohn2019optimal}
Henry Cohn and Felipe Gon{\c{c}}alves.
\newblock An optimal uncertainty principle in twelve dimensions via modular forms.
\newblock {\em Invent. Math.}, 217(3):799--831, 2019.

\bibitem{cohn2017sphere}
Henry Cohn, Abhinav Kumar, Stephen~D. Miller, Danylo Radchenko, and Maryna Viazovska.
\newblock The sphere packing problem in dimension 24.
\newblock {\em Ann. of Math. (2)}, 185(3):1017--1033, 2017.

\bibitem{cohn2022universal}
Henry Cohn, Abhinav Kumar, Stephen~D. Miller, Danylo Radchenko, and Maryna Viazovska.
\newblock Universal optimality of the {$E_8$} and {L}eech lattices and interpolation formulas.
\newblock {\em Ann. of Math. (2)}, 196(3):983--1082, 2022.

\bibitem{cohn2022dual}
Henry Cohn and Nicholas Triantafillou.
\newblock Dual linear programming bounds for sphere packing via modular forms.
\newblock {\em Math. Comp.}, 91(333):491--508, 2021.

\bibitem{Cohn_2014}
Henry Cohn and Yufei Zhao.
\newblock Sphere packing bounds via spherical codes.
\newblock {\em Duke Math. J.}, 163(10):1965--2002, 2014.

\bibitem{gonccalves2021regularity}
Felipe Gon{\c{c}}alves, Diogo Oliveira~e Silva, and Jo\~ao P.~G. Ramos.
\newblock On regularity and mass concentration phenomena for the sign uncertainty principle.
\newblock {\em J. Geom. Anal.}, 31(6):6080--6101, 2021.

\bibitem{gonccalves2017hermite}
Felipe Gon{\c{c}}alves, Diogo Oliveira~e Silva, and Stefan Steinerberger.
\newblock Hermite polynomials, linear flows on the torus, and an uncertainty principle for roots.
\newblock {\em J. Math. Anal. Appl.}, 451(2):678--711, 2017.

\bibitem{hardy1933theorem}
G.~H. Hardy.
\newblock A {T}heorem {C}oncerning {F}ourier {T}ransforms.
\newblock {\em J. London Math. Soc.}, 8(3):227--231, 1933.

\bibitem{jaming2007nazarov}
Philippe Jaming.
\newblock Nazarov's uncertainty principles in higher dimension.
\newblock {\em J. Approx. Theory}, 149(1):30--41, 2007.

\bibitem{kabatiansky1978bounds}
G.~A. Kabatiansky and V.~I. Levenshtein.
\newblock Bounds for packings on the sphere and in space.
\newblock {\em Problemy Pereda{\u{c}}i Informacii}, 14(1):3--25, 1978.

\bibitem{li2025dual}
Rupert Li.
\newblock Dual linear programming bounds for sphere packing via discrete reductions.
\newblock {\em Adv. Math.}, 460:Paper No. 110043, 24, 2025.

\bibitem{radchenko2019fourier}
Danylo Radchenko and Maryna Viazovska.
\newblock Fourier interpolation on the real line.
\newblock {\em Publ. Math. Inst. Hautes \'Etudes Sci.}, 129:51--81, 2019.

\bibitem{torquato2006new}
S.~Torquato and F.~H. Stillinger.
\newblock New conjectural lower bounds on the optimal density of sphere packings.
\newblock {\em Experiment. Math.}, 15(3):307--331, 2006.

\bibitem{viazovska2017sphere}
Maryna~S. Viazovska.
\newblock The sphere packing problem in dimension 8.
\newblock {\em Ann. of Math. (2)}, 185(3):991--1015, 2017.

\end{thebibliography}

\end{document}